\long\def\symbolfootnote[#1]#2{\begingroup\def\thefootnote{\fnsymbol{footnote}}
\footnote[#1]{#2}\endgroup}
\begin{document}

\newcommand\blfootnote[1]{%
  \begingroup
  \renewcommand\thefootnote{}\footnote{#1}%
  \addtocounter{footnote}{-1}%
  \endgroup
}

\newtheorem{lemma}{Lemma}[section]
\newtheorem{theorem}[lemma]{Theorem}
\newtheorem{corollary}[lemma]{Corollary}
\newtheorem{conjecture}[lemma]{Conjecture}
\newtheorem{prop}[lemma]{Proposition}

\newtheorem*{T1}{Theorem 1}
\newtheorem*{T2}{Theorem 2}
\newtheorem*{T3}{Theorem 3}
\newtheorem*{C2}{Corollary 2}
\newtheorem*{C3}{Corollary 3}
\newtheorem*{T4}{Theorem 4}
\newtheorem*{C5}{Corollary 5}
\theoremstyle{remark}
\newtheorem{remark}[lemma]{Remark}
\theoremstyle{definition}
\newtheorem{defn}[lemma]{Definition}
\newtheorem{question}[lemma]{Question}
\renewcommand{\labelenumi}{(\roman{enumi})}
\newcommand{\Ext}{\mathrm{Ext}}
\newcommand{\Hom}{\mathrm{Hom}}
\newcommand{\ch}{\mathrm{ch}}
\newcommand{\Soc}{\mathrm{Soc}}

\newenvironment{changemargin}[1]{%
  \begin{list}{}{%
    \setlength{\topsep}{0pt}%
    \setlength{\topmargin}{#1}%
    \setlength{\listparindent}{\parindent}%
    \setlength{\itemindent}{\parindent}%
    \setlength{\parsep}{\parskip}%
  }%
  \item[]}{\end{list}}

\parindent=0pt
\addtolength{\parskip}{0.5\baselineskip}
\title{The second cohomology of simple $SL_3$-modules}
\author{David I. Stewart}
\date{New College, Oxford}
\maketitle
{\small {\bf Abstract.}  Let $G$ be the simple, simply connected algebraic group $SL_3$ defined over an algebraically closed field $K$ of characteristic $p>0$. In this paper, we find $H^2(G,V)$ for any irreducible $G$-module $V$. When $p>7$ we also find $H^2(G(q),V)$ for any irreducible $G(q)$-module $V$ for the finite Chevalley groups $G(q)=SL(3,q)$ where $q$ is a power of $p$.} 
\section{Introduction}
Let $G=SL_3$ be defined over an algebraically closed field $K$ of characteristic $p>0$. Let $T$ be a maximal torus of $G$. Recall that the weight lattice $X(T)$ of $T$ is generated over $\mathbb Z$ by the fundamental weights $\lambda_1$ and $\lambda_2$ so can be identified with $\mathbb Z\times \mathbb Z$ by $(a,b)=a\lambda_1+b\lambda_2$, while the dominant weights $X_+$ of $T$ can be identified with $(a,b)$ such that $a,b\in\mathbb Z_{\geq 0}$. Thus for each pair of positive integers $(a,b)$ there is an irreducible module $L(a,b)$ of highest weight $(a,b)$. Let $V$ be a $G$-module. As $G$ is defined over $\mathbb F_p$ we have the notion of the $d$th Frobenius map which raises each matrix entry in $G$ to the power $p^d$. When composed with the representation $G\to GL(V)$, this induces the twist $V^{[d]}$ of $V$. The first Frobenius map has a infinitesimal scheme-theoretic kernel $G_1\triangleleft G$, (see for instance, \cite[I.9]{Jan03}).

Let $(a_0,b_0)+p(a_1,b_1)+p^2(a_2,b_2)+\dots$ be the $p$-adic expansion of the pair of integers $(a,b)$. By Steinberg's tensor product theorem, the irreducible module $L(a,b)$ of high weight $(a,b)$ is given by \[L(a_0,b_0)\otimes L(a_1,b_1)^{[1]}\otimes L(a_2,b_2)^{[2]}\otimes\dots\ .\]

In \cite{Ste10} we calculated the second cohomology of simple modules for $SL_2$. We perform the task here for $SL_3$-modules.

\begin{T1}\label{Thm1} Let $V=L(a,b)^{[d]}$ be any Frobenius twist (possibly trivial) of the irreducible $G$-module $L(a,b)$ with highest weight $(a,b)$. For $p\geq 3$, let $L(a,b)$ or $L(b,a)$ be one of \begin{align*}
&L(1,1)^{[1]},\\
&L(p-3,0)\otimes L(0,1)^{[1]},\\
&L(p-2,1)\otimes L(p-3,p-2)^{[1],}\\
&L(p-2,1)\otimes L(2,p-3)^{[1]}\otimes L(1,0)^{[2]},\\
&L(p-2,1)\otimes L(p-2,2)^{[1]}\otimes L(0,1)^{[2]},\\
&L(p-2,1)\otimes L(0,1)^{[1]}\otimes L(p-2,p-2)^{[r+1]},\\
&L(p-2,1)\otimes L(0,1)^{[1]}\otimes L(p-2,1)^{[r+1]}\otimes L(0,1)^{[r+2]},\\
&L(p-2,1)\otimes L(0,1)^{[1]}\otimes L(1,p-2)^{[r+1]}\otimes L(1,0)^{[r+2]},\\
&L(p-2,p-2)\otimes L(p-2,p-2)^{[r]},\text{ or}\\
&L(p-2,p-2)\otimes L(1,p-2)^{[r]}\otimes L(1,0)^{[r+1]}
\end{align*}
for any $r>0$; and for $p=2$, let $L(a,b)$ or $L(b,a)$ be one of \begin{align*}
&L(1,1)^{[1]},\\
&L(1,0)\otimes L(0,1)^{[2]},\ (^*)\\
&L(1,0)\otimes L(1,0)^{[1]}\otimes L(0,1)^{[2]},\\
&L(1,0)\otimes L(1,0)^{[1]}\otimes L(0,1)^{[r+1]}\otimes L(0,1)^{[r+2]},\\
&L(1,0)\otimes L(1,0)^{[1]}\otimes L(1,0)^{[r+1]}\otimes L(1,0)^{[r+2]}\end{align*}\blfootnote{(*) In the published version of this article, the value $2$ appears incorrectly as $1$. This was due to an incorrect reading of value of $H^1(B_1,(1,0))$ from \cite[Appendix C]{Wri11}. Tributary results and proofs have been adjusted accordingly. We thank Adam Thomas for noticing this error.}
for any $r>0$.

Then $H^2(G,V)=K$. For all other irreducible $G$-modules $V$, $H^2(G,V)=0$.
 \end{T1}

This result was inspired by the methods of G. McNinch's paper \cite{McN02} which computes $H^2(G,V)$ for simply connected algebraic groups $G$ acting on modules $V$ with $\dim V\leq p$. Note that this paper makes a correction to Theorem A in a special case: if $G=SL_3(K)$, $p=3$ and  $V=L(1,0)^{[1]}$ or $L(0,1)^{[1]}$ is a Frobenius twist of the $3$-dimensional natural module for $G$ or its dual then we show that $H^2(G,W)=K$ for $W$ any Frobenius twist of $V$.

Finally, for $q=p^r$, let $G(q)$ denote the $\mathbb F_q$-points of the algebraic group $G$. Then $G(q)=SL(3,q)$ is a finite group of Lie type.

Using work of Bendel, Nakano and Pillen we also obtain the second cohomology of the finite groups $SL(3,q)$ with coefficients in simple modules when $p>7$. This is our Theorem 2 and the subject of \S3. We conclude the paper with some discussion of the above results, and ask a few questions.

{\bf Acknowledgements.} Some of the work in this paper was prepared towards the author's PhD qualification under the supervision of Prof. M. W. Liebeck, with financial support from the EPSRC. We would like to thank Prof. Liebeck for his help in producing this paper. Also we thank the anonymous reviewer for many helpful suggestions and corrections; and the editor P. Tiep for his suggestion to extend the work to cover the finite groups of Lie type.

\section{Proof of Theorem 1}
We begin with a little notation which we keep compatible with \cite{Jan03}:

Let $B$ be a Borel subgroup of a reductive algebraic group $G$, containing a maximal torus $T$ of $G$. Recall that for each dominant weight $\lambda\in X(T)$ for $G$, the space $H^0(\lambda):=H^0(G/B,\lambda)=\mathrm{Ind}_B^G(\lambda)$ is a $G$-module with highest weight $\lambda$ and with socle $\mathrm{Soc}_G H^0(\lambda)=L(\lambda)$, the irreducible $G$-module of highest weight $\lambda$. The Weyl module of highest weight $\lambda$ is $V(\lambda)\cong H^0(-w_0\lambda)^*$ where $w_0$ is the longest element in the Weyl group. For $G=SL_3$, we identify $X(T)$ with $\mathbb Z^2$. In this case  $H^0(a,b)=V(a,b)^*=V(b,a)$. When $0\leq a,b<p$, we say that $(a,b)$ is a restricted weight and we write $(a,b)\in X_1$. Let $(a,b)=(a_0,b_0) +p(a_1,b_1)+\dots+p^n (a_n,b_n)$. We will often need to refer to specific parts of this $p$-adic expansion, so we write 
\begin{align*}(a,b)&=(a_0,b_0)+ p(a,b)'\\&=(a_0,b_0)+p(a_1,b_1)+p^2(a,b)'',\end{align*} where $(a,b)'=(a_1,b_1)+p(a_2,b_2)+\dots+ p^{n-1}(a_n,b_n)$ and $(a,b)''=(a_2,b_2)+p(a_3,b_3)+\dots+p^{n-2} (a_n,b_n)$.

For $\mathbb ZR$, the root lattice of $G$, and $W$ the Weyl group of $G$, recall the dot action of the affine Weyl group $W_p=W\rtimes p\mathbb Z R$ on the weight lattice $X(T)$: where $W\leq W_p$ acts as $w\centerdot \lambda=w(\lambda+\rho)-\rho$, where $\rho$ is the half-sum of the positive roots and $\mathbb ZR$ acts by translations. In case $G=SL_3$, $\rho=(1,1)$. We repeatedly use the linkage principle for $G$ and $G_1$ (see \cite[II.6.17]{Jan03} and \cite[II.9.19]{Jan03}); this means  that if $\Ext_G^i(L(\lambda),L(\mu))\neq 0$ or $\Ext_{G_1}^i(L(\lambda),L(\mu))\neq 0$ for any $i\geq 0$ then $\lambda\in W_p\centerdot \mu$ or $\lambda\in W_p\centerdot \mu+pX(T)$ respectively. In Table 1 we detail the cases that can occur for $SL_3$ for a restricted weight $(a_0,b_0)$, where $s_\alpha$ ($s_\beta$, respectively) denotes the reflection in the hyperplane perpendicular to the simple root $\alpha$ corresponding to the fundamental weight $(1,0)$ ($(0,1)$ respectively).

\begin{table}[h]
\begin{center}
\begin{tabular}{|c|c|c|c|c|c|}
\hline
$w$ & $l(w)$ & $w.(a_0,b_0)$\\
\hline
$1$ & $0$ & $(a_0,b_0)$\\
$s_\alpha$ & $1$ & $(-a_0-2,a_0+b_0+1)$\\
$s_\beta$ & $1$ & $(a_0+b_0+1,-b_0-2)$\\
$s_\beta s_\alpha$ & $2$ & $(b_0,-a_0-b_0-3)$\\
$s_\alpha s_\beta$ & $2$ & $(-a_0-b_0-3,a_0)$\\
$w_0$ & $3$ & $(-b_0-2,-a_0-2)$\\
\hline\end{tabular}\caption{Dot actions}\end{center}\end{table}\label{dots}

In particular, taking $(a_0,b_0)=(0,0)$ we record the following lemma for later use:

\begin{lemma}\label{linkages} The only restricted weights $G_1$-linked to $(0,0)$ up to duals are $(0,0),\ (p-2,1),\ (p-3,0),\ (p-2,p-2)$, and the only restricted weight $G$-linked to $(0,0)$ is $(p-2,p-2)$.\end{lemma}

Let $V$ be a $G$-module. Recall the kernel $G_1$ of the Frobenius map $F:G\to G$ is a scheme-theoretic normal subgroup of $G$. Thus the cohomology group $M=H^i(G_1,V)$ has the structure of a $G/G_1$-module, and so also of a $G$-module. Since $G_1$ acts trivially on this module, there is a Frobenius untwist $M^{[-1]}$ of $M$.  By \cite[I.9.5]{Jan03}, $G/G_1\cong F(G)$, where $F$ is the first Frobenius morphism. Thus $G/G_1$ acts on $H^i(G_1,V)$ as $G$ acts on $H^i(G_1,V)^{[-1]}$.

There are two main ingredients in the proof of Theorem 1. The first is the Lyndon-Hochschild-Serre spectral sequence \cite[6.6 (3)]{Jan03} applied to $G_1\triangleleft G$, using the observations in the preceding paragraph.

\begin{prop}\label{spec} There is for each $G$-module $V$ a spectral sequence \[E_2^{nm}=H^n(G,H^m(G_1,V)^{[-1]})\Rightarrow H^{n+m}(G,V).\]\end{prop}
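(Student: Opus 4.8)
The plan is to read the statement off from the general Lyndon--Hochschild--Serre spectral sequence for a normal subgroup scheme, specialised to $G_1\triangleleft G$, and then to rewrite the $E_2$-terms by means of the Frobenius untwist discussed in the paragraph preceding the proposition.

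Concretely, I would begin by quoting \cite[I.6.6(3)]{J}: for an affine group scheme $G$, a normal subgroup scheme $N\triangleleft G$ and a $G$-module $V$ there is a first-quadrant spectral sequence
\[E_2^{nm}=H^n(G/N,H^m(N,V))\Rightarrow H^{n+m}(G,V),\]
in which $H^m(N,V)$ is regarded as a $G/N$-module in the canonical way, the residual $N$-action on its own cohomology being trivial. Since $G_1$ is a normal (infinitesimal) subgroup scheme of $G$, taking $N=G_1$ yields a spectral sequence with $E_2^{nm}=H^n(G/G_1,H^m(G_1,V))$ converging to $H^{n+m}(G,V)$. It then remains only to identify the outer cohomology groups. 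By \cite[I.9.5]{J} the first Frobenius morphism induces $G/G_1\cong F_1(G)$, and, as recalled above, $G/G_1$ acts on any module $M$ on which $G_1$ acts trivially exactly as $G$ acts on the untwist $M^{[-1]}$; because $K$ is perfect this untwisting is an exact equivalence between the category of $G/G_1$-modules and the category of $G$-modules, carrying $K$ to $K$, and so induces natural isomorphisms $H^n(G/G_1,M)\cong H^n(G,M^{[-1]})$ for every $n\geq 0$. Applying this with $M=H^m(G_1,V)$ turns the $E_2$-page into $E_2^{nm}=H^n(G,H^m(G_1,V)^{[-1]})$, leaving the differentials and the abutment $H^{n+m}(G,V)$ unchanged, which is precisely the assertion.

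I do not expect any genuine difficulty here: the proposition is a formal combination of \cite[I.6.6(3)]{J} with the identification $G/G_1\cong F_1(G)$ of \cite[I.9.5]{J}. The one point that deserves to be made explicit is that the Frobenius untwist $M\mapsto M^{[-1]}$ is not merely a bijection on objects but an exact equivalence of abelian categories which preserves the trivial module (and takes injectives to injectives), so that it may legitimately be passed through $H^n(-,-)$; since $K$ is algebraically closed, and hence the $p$th power map on $K$ is an automorphism, this is routine.
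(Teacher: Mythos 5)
Your proposal is correct and matches the paper's own (essentially unstated) argument: the paper likewise just invokes the Lyndon--Hochschild--Serre spectral sequence of \cite[I.6.6(3)]{J} for $G_1\triangleleft G$ together with the identification $G/G_1\cong F_1(G)$ from \cite[I.9.5]{J} and the resulting untwist of the coefficients. Your extra remark that $M\mapsto M^{[-1]}$ is an exact equivalence preserving the trivial module is a reasonable point to make explicit, but it is the same route.
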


We will always refer by $E_*^{**}$ to terms in the above spectral sequence. We briefly recall the important features of spectral sequences for the unfamiliar reader. The lower subscript refers to the sheet of the spectral sequence. Only the second sheet is explicitly defined in this example. The point $E_2^{nm}$ is defined only for the first quadrant, i.e. for $n,m\geq 0$; for any other $n,m$ we have $E_2^{nm}=0$. On the $i$th sheet, maps in the spectral sequence through the $n,m$th point go \[\dots\to E_i^{n-i,m+i-1}\stackrel{\rho}{\to} E_i^{nm}\stackrel{\sigma}{\to}E_i^{n+i,m-i+1}\to\dots.\] These form a complex, i.e. $\sigma\rho=0$. One then gets the point of the next sheet, $E_{i+1}^{nm}$ as a section of $E_i^{nm}$ by setting $E_{i+1}^{nm}=$ker $\sigma/$im $\rho$, i.e. the cohomology at that point. If $i$ is big enough, maps go from outside the defined quadrant to a given point and then from that point to outside the defined quadrant again. Thus each point of the spectral sequence eventually stabilises. We denote the stable value by $E_\infty^{nm}$. Finally, one gets \begin{equation}\label{sumup}H^r(G,V)=\bigoplus_{n+m=r}E_\infty^{nm}\end{equation} explaining the notation `$\Rightarrow H^{n+m}(G,V)$'.

For the remainder of Section 2 we take $G=SL_3$. The second main ingredient in our calculation is the structure of $H^i(G_1,L(a,b))^{[-1]}$ as a $G$-module for $i\leq 2$. 

When $i=1$ this is a special case of \cite[3.3.2]{Yeh82}. Since this useful result has not appeared in print, we record it here.
\begin{prop}[Yehia]\label{yehia} For every $\lambda,\mu \in X_1$ the $G$-structure of $E=\Ext^1_{G_1}(L(\mu),L(\lambda))$ when $E\neq 0$ is given in the following table:
{\footnotesize \begin{center}\begin{tabular}{| l | l | l l |}\hline
$\lambda$ & $\mu$ & $E$&\\\hline
$(r,s)\in C_0$ & $(p-s-2,p-r-2)$ & $K$ & $[K\oplus L(1,0)\oplus L(0,1),\ p=3]$\\
 & $(r+s+1,p-s-2)$ & $L(1,0)$ & $[K\oplus L(1,0)\oplus L(0,1),\ p=3]$\\
 & $(p-r-2,r+s+1)$ & $L(0,1)$ & $[K\oplus L(1,0)\oplus L(0,1),\ p=3]$\\\hline
$(r,s)\in C_1$ & $(p-s-2,p-r-2)$ & $K$ & $[K\oplus L(1,0)\oplus L(0,1),\ p=3]$\\
& $(p-r-2,-p+r+s+1)$ & $L(1,0)$ & $[K\oplus L(1,0)\oplus L(0,1),\ p=3]$\\
& $(-p+r+s+1,p-s-2)$ & $L(0,1)$ & $[K\oplus L(1,0)\oplus L(0,1),\ p=3]$\\\hline
$(r,s),$ & $(p-1,r)$  & $L(1,0)$ &\\
$r+s=p-2$ & $(s,p-1)$  & $L(0,1)$ &\\\hline
$(r,p-1)$ & $(p-r-2,r)$  & $L(1,0)$ &\\\hline
$(p-1,s)$ & $(s,p-s-2)$  & $L(0,1)$ &\\\hline
\end{tabular}
\end{center}}
where in the above, $C_0$ is the fundamental alcove so that $C_0\cap X_+=\{(r,s)\in X_+:r+s<p-2\}$ and $C_1$ is the upper alcove in $X_1$ with $C_1\cap X_+=\{(r,s)\in X_+:r,s\leq p-2; r+s>p-2\}$.\end{prop}

We need the following lemma.

\begin{lemma}\label{strofind} Let $\lambda\in X_1$ be a restricted dominant weight which is $G_1$-linked to $(0,0)$. Then up to duality, the structure of $H^0(\lambda)$ is as follows:
\begin{center}\begin{tabular}{|l|l|}\hline
$\lambda$ & $H^0(\lambda)$\\\hline
$(0,0)$ & $K$\\
$(p-3,0)$ & $L(p-3,0),\ (p>3)$\\
$(p-2, 1)$ & $L(p-3,0)|L(p-2,1),\ (p>3)$\\
$(p-2,p-2)$ & $K|L(p-2,p-2), (p>2)$\\\hline\end{tabular}\end{center}
where $L(\lambda)|L(\mu)$ indicates a uniserial module of length $2$ with head $L(\lambda)$ and socle $L(\mu)$.\end{lemma}
\begin{proof}The first two items in the table follow since the weights concerned are in the lowest alcove, $C_0$ ($\bar C_0$ if $p=2$). The last two follow by an application of \cite[II.6.24]{Jan03}.\end{proof}



Using the graph automorphism of $G$, one sees that, $^\tau H^i(G_1,L(a,b))^*\cong H^i(G_1,L(b,a))$ for $\tau$ the antiautomorphism of \cite[II.1.16]{Jan03}, thus

\begin{prop}\label{cfk}Let $(a,b)\in X_1$. Then the non-zero values of $H^i(G_1,L(a,b))^{[-1]}$ for $i=0,1,2$ up to duals are given below by their structure as $G$-modules:

\begin{center}\begin{tabular}{|c c c c|}\hline
$i$ & $p$ & $(a,b)$ & $H^i(G_1,L(a,b))^{[-1]}$\\
\hline\hline
$0$ & any & $(0,0)$ & $K$\\
$1$ & $p>3$ & $(p-2,1)$ & $L(1,0)$\\
&&$(p-2,p-2)$ & $K$\\
& $p=3$ & $(1,1)$ & $K\oplus L(1,0)\oplus L(0,1)$\\
& $p=2$ & $(0,1)$ & $L(1,0)$\\
$2$&$p>3$ & $(0,0)$ & $L(1,1)$\\
&&$(p-3,0)$ & $L(1,0)$\\
& $p=3$ & $(0,0)$ & $H^0(1,1)\oplus L(1,0)\oplus L(0,1)$\\
&$p=2$ & $(0,0)$ & $L(1,1)$\\
&& $(1,0)$ & $H^0(2,0)\cong L(0,1)/L(2,0)$\\\hline\end{tabular}\end{center}
where when $p=3$, $H^0(1,1)\cong \mathfrak g^*$ is the uniserial module with head $L(0,0)$ and socle $L(1,1)$ by \ref{strofind}. 
\end{prop}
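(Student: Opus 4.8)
The plan is to compute each group $H^i(G_1,(a,b))$ for $i=0,1,2$ by exploiting the fact that, as a $G$-module, $H^i(G_1,(a,b))$ carries a natural $G/G_1$-structure, hence is the Frobenius twist of a $G$-module $H^i(G_1,(a,b))^{[-1]}$, and that by the linkage principle for $G_1$ only weights $(a,b)\in X_1$ that are $G_1$-linked to $(0,0)$ can contribute. Table 1 together with the remark following it restricts us (up to duality, which is legitimate by $H^i(G_1,V)^*\cong H^i(G_1,V^*)$) to the four restricted weights $(0,0)$, $(p-2,1)$, $(p-3,0)$ and $(p-2,p-2)$. For $i=0$ the statement is immediate: $H^0(G_1,(a,b))=(a,b)^{G_1}$ is nonzero only for $(a,b)=(0,0)$, where it is the trivial module $K$. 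So the work is entirely in $i=1$ and $i=2$.

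First I would assemble the $i=1$ data. The standard tool is the identification $H^1(G_1,L(\lambda))^{[-1]}$ with a submodule of $H^0(\alpha_0)\otimes\,(\text{something})$ coming from the description of $H^\bullet(G_1,K)$ (the cohomology of the restricted enveloping algebra) and the five-term exact sequence, but the cleanest route for $SL_3$ is to quote the known tables: for $p>3$ the nonzero $H^1(G_1,\lambda)^{[-1]}$ among linked restricted weights are $H^1(G_1,(p-2,1))^{[-1]}=(1,0)$ and $H^1(G_1,(p-2,p-2))^{[-1]}=K$, and one reads these off from Jantzen's computation of $H^1(G_1,H^0(\lambda))$ together with Lemma \ref{strofind}, which tells us $H^0(\lambda)=L(\lambda)$ in all the relevant cases except $\lambda=(p-2,p-2)$, where the uniserial structure with head $(0,0)$ feeds the extra cohomology in degree $2$ rather than degree $1$. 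The small primes $p=2,3$ must be handled separately, since then $(p-2,1)$, $(p-3,0)$, $(p-2,p-2)$ and $(1,1)$ can coincide or become non-restricted in disguise; for $p=3$ one gets the larger module $K+(1,0)+(0,1)$ in $H^1(G_1,(1,1))^{[-1]}$, which I would verify directly from the known structure of $H^\bullet(G_1,K)$ for $SL_3$ in characteristic $3$ (equivalently, from $H^1(\mathfrak{u},\cdot)$ via a Kostant-type calculation), and for $p=2$ the single entry $H^1(G_1,(0,1))^{[-1]}=(1,0)$ comes from the natural module being its own "first twist partner."

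For $i=2$ the key device is the Lyndon--Hochschild--Serre spectral sequence for $G_2\triangleleft G$ (or the hyperalgebra analogue), together with the already-established degree $\leq 1$ values, and the vanishing $H^1(G_1,K)=0$ (so the spectral sequence degenerates enough in low degrees). Concretely, for $\lambda=(0,0)$ one has $H^2(G_1,K)^{[-1]}$, which for $p>3$ is $(1,1)$ — this is $H^2(\mathfrak g,K)$-type data, or better, it is $H^0(\lambda)$ for $\lambda$ the highest short root, reflecting $H^\bullet(G_1,K)\cong S(\mathfrak g^*)^{(1)}$ in low degrees for $p$ good — and for $p=2$ it is again $(1,1)$ while for $p=3$ it acquires the extra summands and becomes $H^0(1,1)\oplus(1,0)\oplus(0,1)$ with $H^0(1,1)\cong\mathfrak g^*$ the uniserial module of Lemma \ref{strofind}; the extra $(1,0)+(0,1)$ and the non-split extension reflecting $\mathfrak g^*$ versus $L(1,1)\oplus K$ is exactly the characteristic-$3$ pathology I would isolate and check by hand. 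For $\lambda=(p-3,0)$ and $p>3$, the entry $H^2(G_1,(p-3,0))^{[-1]}=(1,0)$ I would get from the $G_1$-linkage (only $(1,0)$ survives as a $G$-module on the untwisted side by Table 1) combined with a dimension/character count, and for $p=2$ the companion entry $H^2(G_1,(1,0))^{[-1]}=(1,0)$ similarly. One must also confirm there are \emph{no} other nonzero entries: that is, $H^2(G_1,(p-2,1))^{[-1]}$ and $H^2(G_1,(p-2,p-2))^{[-1]}$ vanish, which follows from the linkage classes in Table 1 forcing any contribution to lie outside $X_1$ after untwisting, hence to be zero as a $G/G_1$-module of the required form.

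The main obstacle is the small-prime bookkeeping, especially $p=3$: there $H^0(1,1)$ is no longer semisimple, several of the "distinct" linked weights collide, and the module $H^2(G_1,K)^{[-1]}$ genuinely grows, so the generic argument via $S(\mathfrak g^*)$ or via Andersen--Jantzen's formulas breaks and one needs an explicit computation of $H^\bullet(\mathfrak{u},K)$ or of the relevant $\Ext$-groups in the category of $G_1T$-modules to pin down both the composition factors and the (non-split) extension structure. Everything else is a finite, mechanical check against Table 1 and Lemma \ref{strofind}, so I would organize the proof as: (i) reduce to the four linked weights by linkage and duality; (ii) dispatch $i=0$; (iii) quote/derive the $i=1$ entries, treating $p>3$ uniformly and $p=2,3$ by hand; (iv) derive the $i=2$ entries for $p>3$ via the $S(\mathfrak g^*)$ description and linkage, then do $p=2$ and finally the delicate $p=3$ case in detail.
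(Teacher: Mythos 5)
Your overall skeleton matches the paper's: reduce by the $G_1$-linkage principle and duality to the handful of weights in Table 1, dispatch $i=0$ by observing $(a,b)^{G_1}\neq 0$ only for $(0,0)$, and then quote known computations for $i=1$ (the paper cites Yehia's thesis directly for the simple modules, where you would route through $H^1(G_1,H^0(\lambda))$ and Lemma \ref{strofind}) and for $i=2$ (the paper reads $H^2(G_1,H^0(\lambda))$ off from Bendel--Nakano--Pillen for $p\geq 3$ and from Wright for $p=2$, which is the precise form of the ``$H^\bullet(G_1,K)\cong S(\mathfrak g^*)^{(1)}$ in good characteristic'' heuristic you invoke).

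There is, however, a genuine gap at the one point where the proposition needs an actual argument rather than a citation: the vanishing of $H^2(G_1,(p-2,p-2))$. Your justification --- that the linkage classes ``force any contribution to lie outside $X_1$ after untwisting, hence to be zero'' --- is not a valid argument; the untwist $H^2(G_1,M)^{[-1]}$ is a $G$-module with no a priori restriction to $X_1$ (indeed $H^2(G_1,K)^{[-1]}=(1,1)$ is itself restricted), so nothing vanishes for that reason. Since $H^0(p-2,p-2)\neq L(p-2,p-2)$ for $p>2$, the BNP tables do not apply directly to the simple module: the paper first observes from Table 1 that any $w$ with $(p-2,p-2)=w\centerdot 0+p\lambda$ has $l(w)$ odd, so \cite[Thm 6.2(b)]{bnp} gives $H^2(G_1,H^0(p-2,p-2))=0$, and then uses the long exact sequence attached to $0\to(p-2,p-2)\to H^0(p-2,p-2)\to K\to 0$ together with $H^1(G_1,K)=0$ to conclude $H^2(G_1,(p-2,p-2))=0$. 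Relatedly, your parenthetical claim that the uniserial structure of $H^0(p-2,p-2)$ ``feeds the extra cohomology in degree $2$ rather than degree $1$'' is backwards: it is exactly the connecting map from $H^0(G_1,K)$ that produces the entry $H^1(G_1,(p-2,p-2))^{[-1]}=K$ in the table, while $H^2$ vanishes. (The appeal to a Lyndon--Hochschild--Serre sequence for $G_2\triangleleft G$ to compute $H^2(G_1,-)$ is also a non sequitur, though harmless to the structure of the argument.)
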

\begin{proof} By the linkage principle for $G_1$ we need only consider the weights which are $G_1$-linked to $(0,0)$ as listed in \ref{linkages}. 

For $i=0$, it is clear that $H^0(G_1,L(a,b))\cong L(a,b)^{G_1}\neq 0$ if and only if $(a,b)=(0,0)$. Also it is clear that  $H^0(G_1,L(0,0))=K$.

For $i=1$, one uses $\mu=0$ in \ref{yehia}. (Alternatively, use $\lambda=0$ and argue by duality.)

For $i=2$, we have from \ref{strofind} that $L(a,b)=H^0(a,b)$, except for $(a,b)=(p-2,p-2)$, so we may read the result off from \cite[Thm 6.2 (a),(b)]{BNP07} for $p\geq 3$ and $(a,b)\neq (p-2,p-2)$. 

We see from the Table 1 above that if $(p-2,p-2)=w\centerdot 0+p\lambda$  for some $w$, then  $w=w_0$, or $p=3$ and $w=w_0, s_\alpha$, or $s_\beta$. In any case $l(w)$ is odd, so we see from \cite[Thm 6.2(b)]{BNP07} that $H^2(G_1,H^0(p-2,p-2))=0$. Now, associated to the short exact sequence $0\to L(p-2,p-2)\to H^0(p-2,p-2)\to K\to 0$ of $G_1$-modules, there is a long exact sequence of $G$-modules of which part is \[H^1(G_1,K)\to H^2(G_1,L(p-2,p-2))\to H^2(G_1,H^0(p-2,p-2))\]
but since $H^1(G_1,K)=0$ by \cite[II.4.11]{Jan03}, this becomes
\[0\to H^2(G_1,L(p-2,p-2))\to 0\]
so we deduce that $H^2(G_1,L(p-2,p-2))=0$ as claimed.

Similarly, associated to the short exact sequence $0\to L(p-2,1)\to H^0(p-2,1)\to L(p-3,0)\to 0$ there is the exact sequence \[H^1(G_1,L(p-3,0))\to H^2(G_1,L(p-2,1))\to H^2(G_1,H^0(p-2,1))\] and the first and last terms are zero (by \ref{yehia} and by \cite[6.2(b)]{BNP07}, respectively). Thus $H^2(G_1,L(p-2,1))=0$ as claimed.

For $p=2$, the weights $G_1$-linked to $(0,0)$ are $(0,0)$, $(1,0)$ and $(0,1)$. Since for these weights, $H^0(\lambda)\cong L(\lambda)$, we get all $H^2(G_1,L(\lambda))$ from \cite[Thm 5.1.2]{Wri11}. (Note that the published version does not contain Appendix C. We have \cite[Theorem C.1.3]{Wri11} gives $H^2(B_1,(1,0))^{[-1]}\cong \nu_w$ where $(p-3,0)+2\nu_w=(1,0)$, i.e. $\nu_w=(2,0)$. Thus $H^2(G_1,H^0(1,0))^{[-1]}=\mathrm{Ind}_B^G(2,0)=H^2(2,0)$ as required.)
\end{proof}

\begin{corollary}\label{cor}Provided $p>2$, if $\lambda\in X_1$ and $H^1(G_1,L(\lambda))\neq 0$ then $H^0(G_1,L(\lambda))=H^2(G_1,L(\lambda))=0$.\end{corollary}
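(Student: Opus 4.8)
The plan is to read off the conclusion directly from the table in Proposition \ref{cfk}, using the restriction $p > 2$. The corollary asks: for $p > 2$ and $\lambda \in X_1$, whenever $H^1(G_1,\lambda) \neq 0$, we have $H^0(G_1,\lambda) = H^2(G_1,\lambda) = 0$. By the linkage principle for $G_1$, all three cohomology groups can only be non-zero when $\lambda$ is $G_1$-linked to $(0,0)$, so it suffices to run through the (finite) list of such restricted weights tabulated in Proposition \ref{cfk}, together with their duals.

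First I would dispose of $H^0$. By the $i=0$ row of the table (or the observation in the proof of Proposition \ref{cfk}), $H^0(G_1,\lambda) = \lambda^{G_1} \neq 0$ forces $\lambda = (0,0)$. But the table shows $H^1(G_1,(0,0)) = 0$ for every $p$ (there is no $i=1$ entry with $(a,b) = (0,0)$, and indeed $H^1(G_1,K) = 0$ by \cite[II.4.11]{J}). Hence if $H^1(G_1,\lambda) \neq 0$ then $\lambda \neq (0,0)$, so $H^0(G_1,\lambda) = 0$.

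Next, for $H^2$: since $p > 2$, the relevant rows of the table are the $p > 3$ rows and the $p = 3$ rows. For $p > 3$, the weights $\lambda$ with $H^1(G_1,\lambda) \neq 0$ are (up to duals) $(p-2,1)$ and $(p-2,p-2)$, whereas the weights with $H^2(G_1,\lambda) \neq 0$ are (up to duals) $(0,0)$ and $(p-3,0)$. These two lists are disjoint — one checks that $(p-2,1)$, $(p-2,p-2)$, $(0,0)$, $(p-3,0)$, $(1,p-2)$, $(p-2,p-3)$ are pairwise distinct restricted weights for $p > 3$ — so no restricted $\lambda$ lies on both, and the conclusion holds. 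For $p = 3$, the only weight with $H^1(G_1,\lambda) \neq 0$ is $\lambda = (1,1)$, which is self-dual and is manifestly not $(0,0)$, so it does not appear in the $i = 2$ row (whose only entry is $(0,0)$); again both obstructions vanish.

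There is no real obstacle here: the statement is essentially a bookkeeping consequence of Proposition \ref{cfk} once one notes that $p = 2$ is excluded — and indeed for $p = 2$ the corollary would fail, since $H^1(G_1,(0,1)) = (1,0) \neq 0$ while $H^2(G_1,(1,0)) = (1,0) \neq 0$, and these two weights are dual to each other. The only point requiring a moment's care is to make sure the comparison of weight lists is done after taking duals on both sides, i.e. to check that $\{(p-2,1),(1,p-2),(p-2,p-2)\}$ is disjoint from $\{(0,0),(p-3,0),(0,p-3)\}$ for all $p > 3$, which is immediate by comparing entries.
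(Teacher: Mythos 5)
Your proposal is correct and matches the paper's (implicit) argument: the corollary is stated without proof precisely because it is the bookkeeping you describe, reading off Proposition \ref{cfk} and checking that for $p>2$ the list of restricted weights with $H^1(G_1,\lambda)\neq 0$ is disjoint, up to duals, from those with $H^0$ or $H^2$ non-zero. The only blemish is the stray ``$(p-2,p-3)$'' in your intermediate list, which should be $(0,p-3)$, as you in fact write correctly in your final disjointness check.
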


The third ingredient is the values of $\Ext^1_G(L(\lambda),L(\mu))$ from the main result of \cite[4.2.3]{Yeh82}. We will need the values of $\Ext^1_G(L(\lambda),L(\mu))$ only for $\lambda=(0,0),\ (1,0),\ (0,1),$ and $(1,1)$.

\begin{lemma}\label{ext1}If $\Ext^1_G(L(\lambda),L(\mu))$ is non-zero then $\Ext^1_G(L(\lambda),L(\mu))=K$. Let $\lambda=(0,0),\ (1,0),\ (0,1),$ or $(1,1)$. We list the values of $L(\mu)$ in the tables below affording a non-zero value of $\Ext^1_G(L(\lambda),L(\mu))$.

for $p>3$
{\small
\begin{center}\begin{tabular}{|c|l|}\hline 
$\lambda$ & $L(\mu)$\\
\hline\hline $(0,0)$ & $L(p-2,p-2)^{[i]},\ {L(1,p-2)^{[i]}\otimes L(1,0)^{[i+1]}},\ {L(p-2,1)^{[i]}\otimes L(0,1)^{[i+1]}}$\\
\hline
$(1,0)$ & $L(p-2,p-3),\ L(p-3,2)\otimes L(0,1)^{[1]},\ L(2,p-2)\otimes L(1,0)^{[1]}$,\\
& $L(1,0)\otimes L(p-2,p-2)^{[i+1]},\ {L(1,0)\otimes L(1,p-2)^{[i+1]}\otimes L(1,0)^{[i+2]}},$\\& ${L(1,0)\otimes L(p-2,1)^{[i+1]}\otimes L(0,1)^{[i+2]}}$\\
\hline
$(0,1)$ & $L(p-3,p-2),\ L(2,p-3)\otimes L(1,0)^{[1]},\ L(p-2,2)\otimes L(0,1)^{[1]}$,\\
& $L(0,1)\otimes L(p-2,p-2)^{[i+1]},\ {L(0,1)\otimes L(p-2,1)^{[i+1]}\otimes L(0,1)^{[i+2]}},$\\& ${L(0,1)\otimes L(1,p-2)^{[i+1]}\otimes L(1,0)^{[i+2]}}$\\
\hline
$(1,1)$ & $L(p-3,p-3),\ L(3,p-3)\otimes L(1,0)^{[1]},\ L(p-3,3)\otimes L(0,1)^{[1]}$,\\
& $L(1,1)\otimes L(p-2,p-2)^{[i+1]},\ {L(1,1)\otimes L(p-2,1)^{[i+1]}\otimes L(0,1)^{[i+2]}},$\\& ${L(1,1)\otimes L(1,p-2)^{[i+1]}\otimes L(1,0)^{[i+2]}}$\\\hline\end{tabular}\end{center}}

for $p=3$
{\small \begin{center}\begin{tabular}{|c|l|}\hline $\lambda$ & $L(\mu)$\\
\hline\hline $(0,0)$ & $L(1,1)^{[i]}$, $L(1,1)^{[i]}\otimes L(1,0)^{[i+1]}$,
$L(1,1)^{[i]}\otimes L(0,1)^{[i+1]}$\\
\hline
$(1,0)$ & $L(2,1)\otimes L(1,0)^{[1]}$,
$L(0,2)\otimes L(0,1)^{[1]}$, $L(1,0)\otimes L(1,1)^{[i+1]}$,\\
&$L(1,0)\otimes L(1,1)^{[i+1]}\otimes L(1,0)^{[i+2]}$, $L(1,0)\otimes L(1,1)^{[i+1]}\otimes L(0,1)^{[i+2]}$\\
\hline
$(0,1)$ &  $L(1,2)\otimes L(0,1)^{[1]}$, 
$L(2,0)\otimes L(1,0)^{[1]}$, 
$L(0,1)\otimes L(1,1)^{[i+1]}$,\\
& $L(0,1)\otimes L(1,1)^{[i+1]}\otimes L(1,0)^{[i+2]}$, 
$L(0,1)\otimes L(1,1)^{[i+1]}\otimes L(0,1)^{[i+2]}$ \\
\hline
$(1,1)$ & $L(0,0)$, $L(1,1)\otimes L(1,0)^{[1]}$, $L(1,1)\otimes L(0,1)^{[1]}$, $L(1,1)\otimes L(1,1)^{[i+1]}$, \\
&$L(1,1)\otimes L(1,0)^{[i+1]}\otimes L(0,1)^{[i+2]}$, $L(1,1)\otimes L(0,1)^{[i+1]}\otimes L(1,0)^{[i+2]}$ \\\hline\end{tabular}\end{center}}

for $p=2$
{\small \begin{center}\begin{tabular}{|c|l|}\hline $\lambda$ & $L(\mu)$\\
\hline\hline $(0,0)$ & ${L(1,0)^{[i]}\otimes L(1,0)^{[i+1]}},\ {L(0,1)^{[i]}\otimes L(0,1)^{[i+1]}}$\\
\hline
$(1,0)$ & $L(1,0)\otimes L(0,1)^{[1]}$, $L(1,0)\otimes L(1,0)^{[i+1]}\otimes L(1,0)^{[i+2]}$,\\& ${L(1,0)\otimes L(0,1)^{[i+1]}\otimes L(0,1)^{[i+2]}}$\\
\hline
$(0,1)$ & $L(0,1)\otimes L(1,0)^{[1]}$, $L(0,1)\otimes L(0,1)^{[i+1]}\otimes L(0,1)^{[i+2]}$,\\& ${L(0,1)\otimes L(1,0)^{[i+1]}\otimes L(1,0)^{[i+2]}}$\\
\hline
$(1,1)$ & ${L(1,1)\otimes L(1,0)^{[i+1]}\otimes L(1,0)^{[i+2]}}$, $L(1,1)\otimes L(0,1)^{[i+1]}\otimes L(0,1)^{[i+2]}$ \\\hline\end{tabular}\end{center}
} where $i\geq 0$.
\end{lemma}

\begin{prop}\label{23page}In the spectral sequence of \ref{spec} applied to $V=L(a,b)$, 
\begin{enumerate}
\item $E_2^{20}=E_\infty^{20}$
\item $E_2^{02}=E_\infty^{02}$
\item $E_2^{11}=E_\infty^{11}$
\end{enumerate}
\end{prop}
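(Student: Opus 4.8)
The plan is to pin down, for each of the three positions, the only differentials that can be nonzero, and to show each of them vanishes. Since $E_r^{nm}=0$ whenever $n<0$ or $m<0$, the term $E_2^{20}$ is the target of $d_2\colon E_2^{01}\to E_2^{20}$ and the source of no differential; $E_2^{11}$ is the target of no differential and emits only $d_2\colon E_2^{11}\to E_2^{30}$; and $E_2^{02}$ is the target of no differential and emits only $d_2\colon E_2^{02}\to E_2^{21}$ and then, on the kernel of that map, $d_3\colon E_3^{02}\to E_3^{30}$. So parts (i)--(iii) amount to showing these four maps are zero.

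The basic reduction is Steinberg's tensor product theorem together with the triviality of $(a,b)'^{[1]}$ as a $G_1$-module, which gives $H^m(G_1,(a,b))^{[-1]}\cong W_m\otimes(a,b)'$ with $W_m:=H^m(G_1,(a_0,b_0))^{[-1]}$ as listed in Proposition~\ref{cfk}; all the $W_m$ vanish unless $(a_0,b_0)$ is $G_1$-linked to $(0,0)$. Two observations then do most of the work: (a) $W_0\neq0$ forces $(a_0,b_0)=(0,0)$, and in that case $W_1=H^1(G_1,K)^{[-1]}=0$ by \cite[II.4.11]{J}; and (b) for $p>2$, $W_1\neq0$ forces $W_2=0$ by Corollary~\ref{cor}. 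Parts (i) and (iii) follow at once: $E_2^{20}=H^2(G,W_0\otimes(a,b)')$ and $E_2^{30}=H^3(G,W_0\otimes(a,b)')$ vanish unless $(a_0,b_0)=(0,0)$, while in that case $E_2^{01}=H^0(G,W_1\otimes(a,b)')$ and $E_2^{11}=H^1(G,W_1\otimes(a,b)')$ vanish by (a); so both $d_2$'s are zero.

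For the $d_2$ of part (ii): if $W_1=0$ then $E_2^{21}=0$; if $W_1\neq0$ and $p>2$ then $W_2=0$ by (b), so $E_2^{02}=H^0(G,W_2\otimes(a,b)')=0$; and if $W_1\neq0$ and $p=2$ then by Proposition~\ref{cfk} $(a_0,b_0)\in\{(1,0),(0,1)\}$ with $\{W_1,W_2\}=\{L(1,0),L(0,1)\}$, and since $(a,b)'$ is irreducible, $E_2^{02}=\Hom_G(((a,b)')^{*},W_2)\neq0$ only if $(a,b)'\cong W_2^{*}$, whence $E_2^{21}=H^2(G,W_1\otimes W_2^{*})$ equals $\Ext^2_G(V(1,0),H^0(0,1))$ or $\Ext^2_G(V(0,1),H^0(1,0))$ (using $L(1,0)=V(1,0)=H^0(1,0)$ and $L(0,1)=V(0,1)=H^0(0,1)$), which is zero because higher $\Ext$ between a Weyl module and a dual Weyl module vanishes \cite[II.4.13]{J}. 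For the $d_3$ of part (ii): when $(a_0,b_0)\neq(0,0)$ we have $W_0=0$, so $E_2^{30}=0=E_3^{30}$; when $(a_0,b_0)=(0,0)$ we have $E_2^{11}=0$, so $E_3^{30}=E_2^{30}=H^3(G,(a,b)')$, and $E_2^{02}=\Hom_G(((a,b)')^{*},W_2)$ with $W_2=H^2(G_1,(0,0))^{[-1]}$ is nonzero only if $((a,b)')^{*}$ is a socle constituent of $W_2$; by Proposition~\ref{cfk} and Lemma~\ref{strofind} this forces $(a,b)'$ to be $(1,1)$ for every $p$, and also $(1,0)$ or $(0,1)$ when $p=3$. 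In each case $(a,b)'$ is $H^0(1,0)$, $H^0(0,1)$, or (for $p\neq3$) $H^0(1,1)$, so $H^3(G,(a,b)')=0$ by Kempf vanishing; and for $p=3$ the non-split sequence $0\to L(1,1)\to H^0(1,1)\to K\to0$ of Lemma~\ref{strofind}, together with $H^{>0}(G,H^0(1,1))=0$ and $H^{>0}(G,K)=0$, still gives $H^3(G,L(1,1))=0$. Hence $d_3=0$ and part (ii) follows.

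The step I expect to be the real obstacle is the $d_3$ of part (ii): one must first extract from the short lists of Proposition~\ref{cfk} exactly which irreducible $V$ give $E_2^{02}\neq0$ — precisely the $V$ that become the exceptional modules of Theorem~1 — and then verify case by case that $H^3(G,(a,b)')=0$; the one genuinely non-formal ingredient there is Kempf vanishing, supplemented in characteristic $3$ by the non-semisimplicity of $H^0(1,1)$.
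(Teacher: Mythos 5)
Your argument is correct, and its skeleton is the same as the paper's: reduce to the four possibly nonzero differentials touching $E_2^{20}$, $E_2^{11}$, $E_2^{02}$, and kill each one using the vanishing patterns of $H^i(G_1,\lambda_0)$ recorded in Proposition \ref{cfk} and Corollary \ref{cor}. The differences are local, and mostly in your favour. For parts (i) and (iii) you argue uniformly in $p$ via ``$W_0\neq0$ forces $\lambda_0=(0,0)$, whence $W_1=H^1(G_1,K)^{[-1]}=0$'', where the paper invokes Corollary \ref{cor} for $p>2$ and runs a separate check at $p=2$. For the differential $E_2^{02}\to E_2^{21}$ at $p=2$ the paper explicitly decomposes $(0,1)\otimes(0,1)$ as the uniserial module $(1,0)|(0,1)^{[1]}|(1,0)$ and checks $H^2(G,-)$ on each composition factor, whereas you identify $E_2^{21}$ with $\Ext^2_G(V(1,0),H^0(0,1))$ and quote the vanishing of higher $\Ext$ from a Weyl module to a dual Weyl module --- a cleaner route to the same conclusion. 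For the $d_3\colon E_3^{02}\to E_3^{30}$ step, the paper kills $H^3(G,(1,1))$ by the linkage principle when $p\neq3$ and by a degree shift through $H^0(1,1)$ when $p=3$; you instead use $H^{>0}(G,H^0(\lambda))=0$ for dominant $\lambda$ throughout, supplemented at $p=3$ by the short exact sequence of Lemma \ref{strofind}, which is equally valid. One small caveat: your identification $L(1,1)=H^0(1,1)$ for $p>3$ is true but does not follow from Lemma \ref{strofind} as cited (that lemma only treats weights $G_1$-linked to $(0,0)$, which $(1,1)$ is not when $p>3$); it needs the strong linkage principle applied directly, exactly as in the proof of that lemma. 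This does not affect the correctness of the argument.
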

\begin{proof}
First suppose that $p>2$.

Since $E_3^{nm}$ is the cohomology of \[E_2^{n-2,m+1}\to E_2^{nm}\to E_2^{n+2,m-1}\] we have that $E_2^{nm}=E_3^{nm}$ provided the following statement holds: \begin{equation}\label{star}E_2^{n-2,m+1}=E_2^{n+2,m-1}=0\text{ whenever }E_2^{nm}\neq 0.\end{equation} We now show that (\ref{star}) holds for all values of $\lambda=(a,b)$ and all $(n,m)$ with $n+m=2$.

If $E_2^{nm}\neq 0$ then $H^m(G_1,V)\neq 0$ so for $p>2$, (\ref{star}) follows using \ref{cor} in the cases $(n,m)=(0,2),(1,1)$, or $(2,0)$. Since $E_3^{20}=E_\infty^{20}$ and $E_3^{11}=E_\infty^{11}$ for any first quadrant spectral sequence, (i) and (ii) above are true. It remains to check that $E_3^{02}=E_4^{02}$. Now $E_4^{02}$ is the cohomology of $E_3^{-3,2}\to E_3^{02}\to E_3^{30}$ so we are done provided we can show $E_2^{30}=0$ whenever $E_2^{02}\neq 0$. 

Suppose $E_2^{02}\neq 0$ and $E_2^{30}\neq 0$. Then the latter implies $\lambda_0=(0,0)$ by \ref{cfk}. In that case, when $p\neq 3$, $E_2^{02}=H^0(G,L(1,1)\otimes L(\lambda'))=\Hom_G(L(1,1),L(\lambda'))$. So $\lambda'=(1,1)$ and $L(\lambda)=L(1,1)^{[1]}$. But then $E_2^{30}=H^3(G,H^0(G_1,K)^{[-1]}\otimes \lambda)=H^3(G,L(1,1))=0$ since for $p\neq 3$, $(1,1)$ is not linked to $(0,0)$.

For $p=3$ we get similarly that $L(\lambda)=L(1,1)^{[1]}$, $L(0,1)^{[1]}$ or $L(1,0)^{[1]}$. Only $(1,1)$ is linked to $0$ for $p=3$, but $E_2^{30}=H^3(G,L(1,1))=H^2(G,H^0(1,1)/L(1,1))$ by \cite[II.4.14]{Jan03} and so by \ref{strofind}, this is $H^2(G,K)=0$.

Lastly we deal with the case $p=2$. 

We cannot have $E_2^{01}\neq 0$ and $E_2^{20}\neq 0$ by \ref{cfk}. So (i) holds by (\ref{star}). Similarly, we cannot have $E_2^{11}\neq 0$ and $E_2^{30}\neq 0$ so (iii) holds again by (\ref{star}).

Now suppose $E_2^{02}\neq 0$ and $E_2^{21}\neq 0$. From \ref{cfk} we must deal with the two possibilities $\lambda_0=(1,0)$ or $(0,1)$. By duality of the following argument, we may assume the first. Since $E_2^{02}=H^0(G,H^2(G_1,L(1,0))^{[-1]}\otimes \lambda' )\neq 0$ we get $H^0(G,H^0(2,0)\otimes L(\lambda'))=\Hom_G(V(0,2),L(\lambda'))\neq 0$ and thus $\lambda'=(0,2)$ giving $L(\lambda)=L(1,0)\otimes L(0,1)^{[2]}$. Putting this into $E_2^{21}$ we get
\begin{align*}E_2^{21}&=H^2(G,H^1(G_1,L(1,0))^{[-1]}\otimes L(0,1)^{[1]})\\
&=H^2(G,L(0,1)\otimes L(0,1)^{[1]})\end{align*}

Continuing to analyse the latter group using the LHS spectral sequence with terms ${E'}_2^{ij}$, we have ${E'}_2^{20}=0$ 
\begin{align*}{E'}_2^{02}&=H^0(G,H^2(G_1,L(0,1))^{-1}\otimes L(0,1))\cong \Hom_G(V(2,0),L(0,1))=0,\\
{E'}_2^{11}&=H^1(G,H^1(G_1,L(0,1)^{[-1]})\otimes L(0,1))\cong \Ext^1_G(L(0,1),L(0,1))=0;\end{align*}
where the latter equality holds since $G$-modules have no self-extensions. It follows that $E_2^{21}=0$.

Thus we have shown that $E_2^{02}=E_3^{02}$. We must now check that $E_3^{02}=E_4^{02}$ as above. The argument used above for $p\neq 3$ applies in this case showing that $E_2^{30}=0$ so that indeed $E_3^{02}=E_4^{02}=E_\infty^{02}$ and (ii) holds.
\end{proof}

Using (\ref{sumup}) and \ref{23page}, we get

\begin{corollary}\label{h2} For $V=L(a,b)$, $H^2(G,V)=E_2^{02}\oplus E_2^{11}\oplus E_2^{20}$.\end{corollary}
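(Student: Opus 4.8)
The plan is simply to combine the convergence statement (\ref{sumup}) with the three stabilisation facts recorded in Proposition \ref{23page}. First I would put $r=2$ in (\ref{sumup}). Since $E_\infty^{nm}=0$ unless $n,m\geq 0$, the only contributions come from the lattice points $(n,m)=(0,2),(1,1),(2,0)$, so $H^2(G,V)=E_\infty^{02}\oplus E_\infty^{11}\oplus E_\infty^{20}$.

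Then I would apply Proposition \ref{23page}(i)--(iii), which assert respectively $E_\infty^{20}=E_2^{20}$, $E_\infty^{02}=E_2^{02}$ and $E_\infty^{11}=E_2^{11}$, and substitute to obtain $H^2(G,V)=E_2^{02}\oplus E_2^{11}\oplus E_2^{20}$, which is the assertion of the corollary.

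There is no real obstacle here: all the work has already been done in Proposition \ref{23page}, where the potentially nonzero differentials through the three relevant points --- namely the maps $E_r^{n-r,m+r-1}\to E_r^{nm}\to E_r^{n+r,m-r+1}$ for $r\geq 2$ and $(n,m)\in\{(0,2),(1,1),(2,0)\}$ --- are shown to vanish, using Corollary \ref{cor}, Lemma \ref{strofind}, and the short linkage argument that kills $E_2^{30}$. The only point worth a word is that (\ref{sumup}) a priori yields only a filtration of $H^2(G,V)$ whose graded pieces are the $E_\infty^{nm}$; since everything in sight is a $K$-vector space this filtration splits, so recording it as a direct sum --- exactly as was already done in (\ref{sumup}) --- is harmless. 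Hence the corollary is immediate, and its role is purely to package the output of Proposition \ref{23page} into the form in which it will be used in the case analysis that follows.
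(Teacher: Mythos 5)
Your proposal is correct and is exactly the paper's argument: the corollary is obtained by combining the convergence statement (\ref{sumup}) at $r=2$ with the three stabilisations $E_2^{nm}=E_\infty^{nm}$ from Proposition \ref{23page}. Your extra remark that the filtration splits over a field is a harmless clarification and does not change the route.
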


We can now finish the

{\it Proof of Theorem 1:} 

If $V$ is the trivial module then $H^2(G,V)=0$ by, for example, dimension shifting. So let $V=L(\lambda)=L(\mu)^{[d]}$ with $\mu_0\neq (0,0)$.

We have from \ref{h2} that $H^2(G,V)=E_2^{20}\oplus E_2^{11}\oplus E_2^{02}$. Assume $H^2(G,V)\neq 0$.

Let $p>2$ initially. 

Suppose $E_2^{11}=H^1(G,H^1(G_1,L(\lambda_0))^{[-1]}\otimes L(\lambda'))\neq 0$. Then by \ref{cor}, $E_2^{02}=E_2^{20}=0$. Additionally, $H^1(G_1,L(\lambda_0))\neq 0$ so $\lambda_0= (p-2,1),\ (1,p-2)$ or $(p-2,p-2)$ by \ref{cfk}.  In particular $d=0$. Now
\[E_2^{11}=H^1(G,H^1(G_1,L(\lambda_0))^{[-1]}\otimes L(\lambda'))=\Ext_G^1({H^1(G_1,L(\lambda_0))^{[-1]}}^*,L(\lambda'))\]
and we can read off the values of the latter from \ref{ext1} in conjuction with \ref{cfk}. For example, if $p=3$ then $\lambda_0=(1,1)$ and so $H^1(G_1,L(\lambda_0))^{[-1]}=L(0,0)\oplus L(1,0)\oplus L(0,1)$ by \ref{cfk}.  Then $H^2(G,V)=E_2^{11}=\Ext_G^1(L(0,0),L(\lambda'))\oplus\Ext_G^1(L(0,1),L(\lambda'))\oplus\Ext_G^1(L(1,0),L(\lambda'))$. Since $E_2^{11}\neq 0$ by assumption, we see from \ref{ext1} that
\begin{align*}L(&\lambda')\in\{L(1,1)^{[i]},\ L(1,1)^{[i]}\otimes L(1,0)^{[i+1]},\ L(0,2)\otimes L(0,1)^{[1]}, \\& L(2,1)\otimes L(1,0)^{[1]},\ L(1,0)\otimes L(1,1)^{[i+1]},\\& L(1,0)\otimes L(1,1)^{[i+1]}\otimes L(1,0)^{[i+2]},\ L(1,0)\otimes L(1,1)^{[i+1]}\otimes L(0,1)^{[i+2]}\}\end{align*} up to duals, for any $i\geq 0$. For each value of $\lambda'$ above, it follows that $H^2(G,V)=K$ since precisely one of the three terms in $E_2^{11}$ is $K$. One then observes that each $L(\lambda)=L(\lambda_0)\otimes L(\lambda')^{[1]}$ with $\lambda_0=(1,1)$ appears in the statement of Theorem 1. The other cases (for $p>3$) are similar and easier.

Now we assume that $E_2^{11}=0$ and so under the standing assumption that $H^2(G,V)\neq 0$ we must have one or both of $E_2^{02}$ and $E_2^{20}\neq 0$. Suppose it is the former which is non-zero; we will deduce the possible values of $\lambda$ and then show that this implies that $E_2^{20}=0$. Now, since $E_2^{02}\neq 0$, we have $H^2(G_1,L(\lambda_0))\neq 0$ and so up to duals, $\lambda_0=(0,0)$ or $(p-3,0)$ from \ref{cfk} and we also read off the value of  $H^2(G_1,L(\lambda_0))$ from \ref{cfk}. As \[E_2^{02}=\Hom_G({H^2(G_1,L(\lambda_0))^{[-1]}}^*,L(\lambda'))\]
we have that $L(\lambda')$ is an irreducible quotient of ${H^2(G_1,L(\lambda_0))^{[-1]}}^*$. If $p=3$, then $\lambda_0=(0,0)$ and we get ${H^2(G_1,L(\lambda_0))^{[-1]}}^*=V(1,1)\oplus L(1,0)\oplus L(0,1)$ by \ref{cfk} giving $\lambda'=(1,1),\ (1,0)$, or $(0,1)$, and so $L(\lambda)=L(1,1)^{[1]}, L(1,0)^{[1]}$ or  $L(0,1)^{[1]}$ respectively, with each affording $E_2^{02}=K$. Similarly, if $p>3$, we get $L(\lambda)=L(1,1)^{[1]}$, or  $L(p-3,0)\otimes L(1,0)^{[1]}$. In all cases it is easy to see that $E_2^{20}=0$: $H^0(G_1,L(\lambda_0))=0$ unless $\lambda_0=(0,0)$; in this case, $E_2^{20}=H^2(G,L(\lambda'))=0$ for all instances of $\lambda'$. For example, if $p=3$, and $L(\lambda)=L(1,1)^{[1]}$ then $E_2^{20}=H^2(G,L(1,1))=H^1(G,L(0,0))=0$ as argued earlier. Thus when $E_2^{02}\neq 0$, $H^2(G,V)=E_2^{02}=K$.

Lastly if $E_2^{02}=E_2^{11}=0$ then $H^2(G,V)=E_2^{20}=H^2(G,H^0(G_1,L(\lambda_0))^{[-1]}\otimes L(\lambda'))$ and so $\lambda_0=(0,0)$ and $H^2(G,L(\lambda))\cong H^2(G,L(\lambda)^{[-1]})$ as $L(\lambda')=L(\lambda)^{[-1]}$ and so $L(\lambda)$ is a Frobenius twist of one of the modules already discovered above and $H^2(G,V)=K$.

Now assume that $p=2$. If $E_2^{11}\neq 0$, then $H^1(G_1,L(\lambda_0))\neq 0$ and so $\lambda_0=(1,0)$ (up to duality) with $H^1(G_1,L(\lambda_0))^{[-1]}=L(0,1)$. Then $E_2^{11}=\Ext_G^1(L(1,0),L(\lambda'))$ and so \begin{align*}L(\lambda)'\in \{L(1,0)\otimes L(0,1)^{[1]},\ L(1,0)\otimes L(1,0)^{[i+1]}\otimes L(1,0)^{[i+2]},\\ L(1,0)\otimes L(0,1)^{[i+1]}\otimes L(0,1)^{[i+2]}\}\end{align*} by \ref{ext1}. Observe that $E_2^{20}=0$ as $H^0(G_1,L(\lambda_0))=0$. We examine \[E_2^{02}=H^0(G,H^2(G_1,L(\lambda_0))^{[-1]}\otimes L(\lambda')).\] When $L(\lambda_0)=(1,0)$ we have $E_2^{02}=\Hom_G(V(0,2),L(\lambda')^*)$ by \ref{cfk} and so $\lambda'=(0,2)$ for this term to be non-zero, yet this is not included in the possibilities for $\lambda'$ above. Thus if $E_2^{11}\neq 0$, $E_2^{20}=E_2^{02}=0$ and $H^2(G,V)=E_2^{11}=K$.

If $E_2^{02}\neq 0$ similarly we get that $L(\lambda)=L(1,1)^{[1]}$, $L(0,1)\otimes L(1,0)^{[2]}$ or $L(1,0)\otimes L(0,1)^{[2]}$. In each of these cases $E_2^{11}=E_2^{20}=0$ with $H^2(G,V)=E_2^{02}=K$.

Lastly if $E_2^{02}=E_2^{11}=0$ then $H^2(G,V)=E_2^{20}=H^2(G,H^0(G_1,L(\lambda_0))^{[-1]}\otimes L(\lambda'))$ and so $\lambda_0=0$ and $H^2(G,L(\lambda))\cong H^2(G,L(\lambda)^{[-1]})$. Thus $L(\lambda)$ is a Frobenius twist of one of the modules discovered above and $H^2(G,V)=K$.

\section{Finite groups of Lie type: Proof of Theorem 2}

In this section we use results of Bendel, Nakano and Pillen to get information about the cohomology for the Chevalley groups $SL(3,q)$ with $q=p^r$. Our Theorem 2 is a generalisation of the following theorem to second degree cohomology in the case $G=SL_3$---we give just the split version. Here $X_r$ denotes weights for $G$ which are $p^r$-restricted. Let $h$ be the Coxeter number of $G$.

\begin{theorem}[cf. {\cite[5.5]{BNP06}}] \label{bnp06}Let $G$ be a split reductive algebraic group defined over an algebraically closed field $K$ with char $K=p$. Assume $r\geq 2$ and let $s=\left[\frac{r}{2}\right]$. Assume also $p^{s-1}(p-1)\geq h$. Given $\lambda\in X_r$, let $\lambda=\lambda_0+p^s\lambda_1$ with $\lambda_0\in X_s$ and $\lambda_1\in X_{r-s}$. Define $\tilde\lambda=\lambda_1+p^{r-s}\lambda_0$. Then \[H^1(G(q),L(\lambda))\cong\begin{cases}H^1(G,L(\tilde\lambda))\text{ if } \lambda_0\in\Gamma_h-\{0\}\\
H^1(G,L(\lambda))\text{ else.}\end{cases}\]\end{theorem}
Here $\Gamma_h$ is the set of weights $\{\nu\in X_+ | \langle\nu,\alpha_0^\vee\rangle<h\}$, where $X_+$ denotes the set of dominant weights, $h$ is the Coxeter number for the root system of $G$ and $\alpha_0^\vee$ is the coroot associated to the highest root.

The starting point for the proof of Theorem 2 is the theorem below. Define $X[s]=\{\nu\in X_+ | \langle \nu,\alpha_0^\vee\rangle < 2s(h-1)\}$.

\begin{theorem}[See the proof of {\cite[7.4]{BNP01}}]
\label{BNP01}Let $s\geq 1$, $p\geq (2s+1)(h-1)$ and let $\lambda\in X_r$ with $0\leq i\leq s$.
\begin{enumerate} \item If $\lambda$ is $p$-regular, then
\[H^i(G(\mathbb F_q),L(\lambda))\cong \bigoplus_{\nu\in X[s]}\Ext^i_G(L(\nu),L(\lambda+p^r\nu)).\]
\item If $\lambda$ is $p$-singular, then 
\[H^i(G(\mathbb F_q),L(\lambda))=0.\]\end{enumerate}
\end{theorem}

In the special case that $G=SL_3$ (so $h=3$) and $s=2$, the conclusions of the theorem are valid when $p\geq 11$. Since $\alpha_0=\alpha_1+\alpha_2$, one checks that $\nu=(a,b)\in X[2]$ implies that $a+b<8$. In particular, as $p\geq 11$, $\nu$ is restricted; indeed $\nu\in C_0$, the fundamental alcove.

For Theorem 2, we will need a slight generalisation of the `$\tilde\lambda$' notation above. The following notion will be very familiar to mathematicians working with the cohomology of finite groups of Lie type in defining characteristic, but we are unaware of it ever having appeared in the literature.

\begin{defn}Let $G$ be a split, reductive algebraic group defined over an algebraically closed field $K$ of characteristic $p$. Set $q=p^r$ for some $r\in\mathbb N$ and let $L$ be a $q$-restricted simple module for $G$, i.e. the highest weight of $L$ is in $X_r$. Let $F$ be a standard Frobenius map, so that $G(q)=G(K)^{F^r}$. Then for any $s\geq 0$ the Frobenius twist $L^{[s]}\downarrow G(q)$ is again a simple $G(q)$-module and thus is also a simple $G$-module, $M$ say. We say that {\it $M$ is obtained from $L$ by $q$-wrapping}; more precisely we say that {\it $M$ is the $s$-fold $q$-wrap of $L$}, and write $M=L^{\{s\}}$.\end{defn}

\begin{remark}Note by Steinberg's tensor product theorem, if $\lambda=\lambda_0+p\lambda_1+\dots+p^{r-1}\lambda_{r-1}$, with each $\lambda_i\in X_1$, we have
\[L(\lambda)=L(\lambda_0)\otimes L(\lambda_1)^{[1]}\otimes \dots \otimes L(\lambda_{r-1})^{[r-1]}.\]
Thus, if $1\leq s< r$, it is easy to see that the $s$-fold $q$-wrap of this is $M=$
\[L(\lambda_{r-s})\otimes L(\lambda_{r-s+1})^{[1]}\otimes \dots\otimes L(\lambda_{r-1})^{[s-1]}\otimes L(\lambda_1)^{[s]}\otimes\dots\otimes L(\lambda_{r-s-1})^{[r-1]}.\]
If $s=0$, clearly $M=L$ and for $s\geq r$, note that $L^{\{s\}}\cong L^{\{s-r\}}$.

In addition, if one writes $\lambda=\lambda'+p^{r-s}\lambda''$ for $\lambda'\in X_{r-s}$ and $\lambda''\in X_s$, then let $\tilde\lambda=\lambda''+p^s\lambda'$. Then $M=L(\tilde\lambda)$, which connects with the notation in \ref{bnp06}.
\end{remark}
\begin{T2}Let $\lambda\in X_r$  be a $q$-restricted dominant weight, with $q=p^r$ and suppose $H^2(G(q),L(\lambda))\neq 0$. Then $H^2(G(q),L(\lambda))=K$ and precisely one of the following holds:
\begin{enumerate}\item $H^2(G(q),L(\lambda))\cong H^2(G,L(\mu))\cong K$ where $L(\lambda)$ is $q$-wrapped from some $L(\mu)$, such that $\mu$ is q-restricted  (and is thus determined by Theorem 1).
\item $r=2$ and $L(\lambda)$ or its dual is $L(2,p-2)\otimes L(2,p-2)^{[1]}$ or $L(p-3,2)\otimes L(2,p-3)^{[1]}$.
\item $r=1$ and $L(\lambda)$ is $L(1,1)$, $L(0,p-4)$ or $L(p-4,0)$.
\end{enumerate}
\end{T2}
\begin{proof}By \ref{BNP01} we may assume that $\lambda$ is $p$-regular and we have that \begin{equation}\label{zero}H^2(G(\mathbb F_q),L(\lambda))\cong \bigoplus_{\nu\in X[2]}\Ext^2_G(L(\nu),L(\lambda+p^r\nu)).\end{equation}
We calculate the right hand side of this using translation functors. Let $T_\mu^\xi$ denote the translation functor corresponding to the weights $\mu$ and $\xi$; see \cite[II.7.6(1)]{Jan03}. In the following, recall that $\nu\in X[2]\subseteq C_0$, the fundamental alcove; also, since $p>h$, that the weight $0\in C_0$.

Assuming $\Ext^2_G(L(\nu),L(\lambda+p^r\nu))\neq 0$, we can write $L(\lambda+p^r\nu)=L(w.\nu)$ for $w\in W_p$ by the linkage principle. Thus 
\begin{align*}\Ext^2_G(L(\nu),L(\lambda+p^r\nu))&\cong \Ext^2_G(T_0^\nu K,L(w.\nu))\tag{by \cite[II.7.15]{Jan03}}\\
&\cong \Ext^2_G(K,T_\nu^0 L(w.\nu))\tag{by \cite[II.7.6(2)]{Jan03}}\\
&\cong \Ext^2_G(K,L(w.0))\tag{by \cite[II.7.15]{Jan03} again}\\
&\cong H^2(G,L(w.0)).\end{align*}

In particular, by Theorem 1, \begin{equation}\Ext^2_G(L(\nu),L(\lambda+p^r\nu))\text{ is either $0$- or $1$-dimensional.}\label{one}\end{equation}

Thus $L(w.0)$ is determined by Theorem 1, subject to the condition that $T_0^\nu L(w.0)=L(w.\nu)=L(\lambda)\otimes L(\nu)^{[r]}$. These $L(w.0)$ are easy to find---as $0$ and $\nu$ are in the interior of the same alcoves, we have $T_0^\nu(L(\lambda_0)\otimes L(\lambda')^{[1]})=T_0^\nu(L(\lambda_0))\otimes L(\lambda')^{[1]}$ for any $\lambda=\lambda_0+p\lambda'$ and we can look at the top tensor factors of the modules in Theorem 1 for twists of valid $L(\nu)$ with $\nu\in X[2]$. For a given value of $\nu$, fix such an element $w$ and note that from the linkage principle, we have $\Ext^2_G(L(\nu'),L(\lambda)\otimes L(\nu)^{[r]})=\Ext^2_G(L(\nu'),L(w.\nu))=0$ for any $\nu'\neq \nu$ with $\nu'\in X[2]$. Thus we have that 
\begin{equation}\Ext^2_G(L(\nu),L(\lambda+p^r\nu))\neq 0\text{ for at most one value of $\nu$.}\label{two}\end{equation}

By statements (\ref{zero}), (\ref{one}) and (\ref{two}) we see that $H^2(G(q),L(\lambda))$ is at most one-dimensional. This proves the first part of the statement of the theorem. It remains to establish the non-zero values of $L(\lambda)$. If $\nu=0$, we use Theorem 1 to identify $q$-restricted weights $\lambda$ with $H^2(G,L(\lambda))\neq 0$. Then $H^2(G(q),L(\lambda))=K$ also, and these are included in part (i) of the theorem since $L(\lambda)=L(\lambda)^{\{0\}}$.

Scanning the statement of Theorem 1 for modules $L(w.0)$ satisfying $T_0^\nu L(w.0)=L(w.\nu)=L(\lambda)\otimes L(\nu)^{[r]}$, we find that one of the following hold, up to duality:
\begin{enumerate}\item $\nu=(1,1)$, $r\geq 1$ and $L(w.0)=L(1,1)^{[r]}$,
\item $\nu=(1,0)$ with \begin {enumerate}\item $r\geq 1$ and $L(w.0)=L(0,p-3)^{[r-1]}\otimes L(1,0)^{[r]}$,
\item  $r\geq 2$ and $L(w.0)=L(p-2,1)^{[r-2]}\otimes L(2,p-3)^{[r-1]}\otimes L(1,0)^{[r]}$,
\item  $r\geq 2$ and $L(w.0)=L(1,p-2)^{[r-2]}\otimes L(2,p-2)^{[r-1]}\otimes L(1,0)^{[r]}$,
\item  $r\geq 2$ and $L(w.0)=L(p-2,p-2)^{[s]}\otimes L(1,p-2)^{[r-1]}\otimes L(1,0)^{[r]}$, for any $0\leq s\leq r-2$,
\item  $r\geq 3$ and $L(w.0)=L(1,p-2)^{[s]}\otimes L(1,0)^{[s+1]}\otimes L(1,p-2)^{[r-1]}\otimes L(1,0)^{[r]}$, for any $0\leq s\leq r-3$,
\item  $r\geq 3$ and $L(w.0)=L(p-2,1)^{[s]}\otimes L(0,1)^{[s+1]}\otimes L(1,p-2)^{[r-1]}\otimes L(1,0)^{[r]}$ for any $0\leq s\leq r-3$.
\end{enumerate}\end{enumerate}

We work through the possibilities in turn, again up to duality. 

Firstly, assume the restricted part of $L(w.0)$ is zero; i.e. we are in case (i), case (ii)(a) if $r\geq 2$, cases (ii)(b) or (c) if $r\geq 3$, or the remaining cases if $s>0$. Then $L(w.\nu)=T_0^\nu L(w.0)=L(\nu)\otimes L(w.0)$. Unless $r=1$ and we are in case (i), one can see that $L(\lambda)$ is a $q$-wrap of some $L(\mu)$ with $H^2(G,L(\mu))\neq 0$. For example if $r\geq 3$ and $L(w.0)=L(p-2,1)^{[r-2]}\otimes L(2,p-2)^{[r-1]}\otimes L(1,0)^{[r]}$ (case (ii)(b)), then $L(w.\nu)=L(1,0)\otimes L(p-2,1)^{[r-2]}\otimes L(2,p-2)^{[r-1]}\otimes L(1,0)^{[r]}$, and so $L(\lambda)=L(1,0)\otimes L(p-2,1)^{[r-2]}\otimes L(2,p-2)^{[r-1]}$. But if $L(\mu)=L(p-2,1)^{[r-3]}\otimes L(2,p-2)^{[r-2]}\otimes L(1,0)^{[r-1]}$ then $H^2(G,L(\mu))\cong K$ and $L(\lambda)=L(\mu)^{\{1\}}$. (Indeed, $L(\lambda)=(L(w.0)^{[-1]})^{\{1\}}$ for these cases.) If $r=1$ and $L(w.0)=L(1,1)^{[1]}$, then $L(w.\nu)=L(1,1)\otimes L(1,1)^{[1]}$ and $L(\lambda)=L(1,1)$. Thus the $2$-cohomology for $G(p)$ afforded by the module $L(1,1)$ does not arise from $p$-wrapping from any $p$-restricted module ($p$-wrapping, of course, does nothing).

Thus we may assume that the restricted part of $L(w.0)$ is non-zero, i.e. we are in one of the set of cases (ii) and we have equality in the condition on $r$, or $s=0$.

Let $r=1$. Then $L(w.0)=L(0,p-3)$ and $L(\lambda)=T_0^\nu L(0,p-3)=L(0,p-4)$.

Let $r=2$ and $s\neq 0$, then we find $L(\lambda)$ is one of $L(p-3,2)\otimes L(2,p-3)^{[1]}$, $L(2,p-2)\otimes L(2,p-2)^{[1]}$.

We are left with the case $s=0$. Then one of the following holds: $L(\lambda)=L(p-2,p-3)\otimes L(1,p-2)^{[r-1]}$; or $r\geq 3$ and $L(\lambda)$ is either $L(2,p-2)\otimes L(1,0)^{[1]}\otimes L(1,p-2)^{[r-1]}$ or $L(p-3,2)\otimes L(0,1)^{[1]}\otimes L(1,p-2)^{[r-1]}$. But for all of these we see $L(\lambda)$ is a $q$-wrap of one of the modules in Theorem 1: respectively, {\small
\begin{align*}L(p-2,p-3)\otimes L(1,p-2)^{[r-1]}&=(L(1,p-2)^{[r-2]}\otimes L(p-2,p-3)^{[r-1]})^{\{1\}},\\
 L(2,p-2)\otimes L(1,0)^{[1]}&\otimes L(1,p-2)^{[r-1]}\\=(L(1,&p-2)^{[r-3]}\otimes L(2,p-2)^{[r-2]}\otimes L(1,0)^{[r-1]})^{\{2\}}\text{ and }\\
 L(p-3,2)\otimes L(0,1)^{[1]}&\otimes L(1,p-2)^{[r-1]}\\=(L(1,&p-2)^{[r-3]}\otimes L(p-3,2)^{[r-2]}\otimes L(0,1)^{[r-1]})^{\{2\}}.\end{align*}}
The proof is complete.
\end{proof}
\begin{remark} The result \cite[Proposition 4.4]{Sah77} contains completely general information about second cohomology groups for $SL(n,q)$ with coefficients in its natural module $V$. In the case $n=3$, it says $H^2(G(q),V)=0$ unless $q=2$, $3^d$ for $d>1$, and $5$; in the exceptional cases, $H^2(G(q),V)=K$. We note this proposition agrees with Theorem 2 on their intersection. Note also that when $p=2$ and $r>1$, $H^2(G,V^{[r]})=K$; so one might have predicted the statement for $q=3^d$, though there would be little reason from Theorem 1 to suspect the exceptional cases when $q=2$ or $q=5$.\end{remark}

The way we have defined $q$-wrapping, none of the modules giving rise to non-trivial $2$-cohomology for $G(p)$ are ($p$-wraps of) modules affording non-trivial $2$-cohomology for $G$. In other words,
\begin{corollary}Let $p\geq 11$ and let $\lambda$ be a restricted weight. 

If $H^2(SL(3,p),L(\lambda))$ is non-trivial then it is one-dimensional and $\lambda$ is the weight $(1,1)$, $(p-4,0)$ or $(0,p-4)$.\end{corollary}
\begin{remark} The exceptional cases (ii) and (iii) in the statement of Theorem 2 shows that one will need, generically, some restriction on the power $r$ of $p$ taken for $G(q)$ to get a statement like that of \ref{bnp06} for second cohomology. That is to say, our result shows that even for the case $G=SL_3$, for each prime $p\geq 11$, there are four simple modules for $G(p^2)$ and three for $G(p)$ affording non-trivial $2$-cohomology which do not arise from $q$-wrapping. Perhaps $r\geq 3$ would suffice?\end{remark}

It would be nice to have more general results on how cohomology of simple modules for $G(q)$ arises from the $q$-wrapping of simple modules for $G$ whose cohomology is non-trivial. Specifically, we ask this

\begin{question}Do there exist constants $\xi=\xi(m)$ and constants $\zeta=\zeta(m,\Phi)$ with the property that if $G$ is a split, simple algebraic group with root system $\Phi$ over an algebraically closed field $K$ of characteristic $p\geq\zeta$ and $G(q)\leq G$ is a finite Chevalley group over the field $\mathbb F_q$ with $q=p^r$ such that $r\geq\xi$, then for any pair of $q$-restricted simple $G$-modules, $L(\lambda)$ and $L(\mu)$ we have $\Ext^m_{G(p^r)}(L(\lambda),L(\mu))\cong \Ext^m_G(L(\lambda)^{\{s\}},L(\mu)^{\{s\}})$ for some $0\leq s<r$?\end{question}

Theorem \cite[Theorem 5.6]{BNP06} shows that one can take $\zeta(1,\Phi)=h$ and $\xi(1)=3$.

\begin{remark}The way we have defined it, low values of $r$ compared to the degree of the cohomology are likely to cause problems: without going into the details, we are aware of many examples of situations where there are modules $L(\mu)$ with $\dim H^m(G,L(\mu)^{[s]})\gneqq\dim H^m(G,L(\mu)^{[s-t]})$ for various $t<s$. But we may well have $\dim H^m(G(p^v),L(\mu))=\dim H^m(G,L(\mu)^{[s]})$ for $v<s$. In other words, to see all cohomology for $G(p^v)$ as arising from cohomology for $G$, one might like to consider a notion of $q$-wrapping for non-restricted weights; for instance, one could say a simple $G(q)$-module is `$q$-wrapped' from a simple $G$-module if it is $q$-wrapped (as above) from some $q$-restricted untwist. We would urge caution, though. The example we saw of this for $G=SL_3$ is when $L(\mu)=L(1,1)$ and we have $H^2(G,L(\mu)^{[1]})=K$, $H^2(G,L(\mu))=0$ for all $p$ (by Theorem 1) and $H^2(G(p),L(\mu))=K$ for $p>7$ (by the above corollary). But when $p=2$, $L(\mu)=\text{St}$ and $H^2(G(2),\text{St})=0$ since the Steinberg module is projective for $G(2)$. For low $p$, one would generally expect to find many such weights $\mu$ becoming singular in this fashion.\end{remark}

Somehow the reason one needs a restriction on $r$ to ensure cohomology arises from $q$-wrapping seems to be connected with the number of non-trivial tensor factors in a Steinberg decomposition of the simple module. For $r=2$ we turned up an exception arising from the $G$-module $L(p-2,1)\otimes L(2,p-3)^{[1]}\otimes L(1,0)^{[2]}$ (see the proof of Theorem 2); here, the number of tensor factors is one more than the number $r$. So this question seems bound up with a related question, that is:

\begin{question}Does there exist a constant $\delta=\delta(m)$ with the property: 

{\it Let $G$ be a simple algebraic group defined over a field of characteristic $p>h$; and suppose $L(\lambda)$ and $L(\mu)$ are simple $G$-modules with Steinberg tensor decompositions $L(\lambda_0)\otimes \dots \otimes L(\lambda_r)^{[r]}$ and $L(\mu_0)\otimes\dots\otimes L(\mu_s)^{[s]}$, respectively. Set $I_{\lambda,\mu}=\#\{\lambda_i\neq \mu_i\}$, where we take $\lambda_i=0$ for $i>r$ and $\mu_i=0$ for $i>s$. Then if $\Ext^m_G(L(\lambda),L(\mu))\neq 0$, one has that $I_{\lambda,\mu}\leq\delta(m)$.} ?\end{question}

The above is ambitious, perhaps. If we let $\delta=\delta(m,\Phi)$ depend on the root system $\Phi$ of $G$ then by \cite{Cli79}, we can take $\delta(1,A_1)=2$; by \cite[4.2.3]{Yeh82} we can use $\delta(1,A_2)=2$; from \cite[5.1]{Ye90} we take $\delta(1,B_2)=2$ and from \cite[\S5, Theorem]{Ye93} one also uses $\delta(1,G_2)=2$. Indeed if we take also $p\geq 3h-3$, \cite[Corollary 2.4]{BNP02} says that one can use $\delta(1)=2$, which is of course also the minimum one can take. Might one possibly use $\delta(1)=2$ for all root systems and {\it all} characteristics?

Notice that we need to increase $\delta$ for $2$-cohomology. We have $\delta(2,A_1)\geq 4$ from \cite[Theorem 1]{Ste10} and $\delta(2,A_2)\geq 4$ from Theorem 1 above should they exist. Perhaps $\delta(2)=4$ would suffice?

{\footnotesize\bibliographystyle{amsalpha}
\bibliography{stewart.bib}

David Stewart, New College, Oxford, OX1 3BN, UK. \\ Email: dis20@cantab.net}
\end{document}